\newcommand{\Mdef}[2]{\newcommand{#1}{\relax \ifmmode #2 \else $#2$\fi}}
\newcommand{\sm }{\wedge}
\newcommand{\tensor}{\otimes}
\newcommand{\Hom}{\mathrm{Hom}}
\Mdef{\bhom}{\mathbf{\hat{H}om}}
\Mdef{\Mod}{\mathrm{mod}}
\newtheorem{thm}{Theorem}[section]
\newtheorem{lemma}[thm]{Lemma}
\newtheorem{cor}[thm]{Corollary}
\theoremstyle{definition}
\newtheorem{example}[thm]{Example}
\newtheorem{remark}[thm]{Remark}
\newcommand{\qqed}{\qed \\[1ex]}
\renewenvironment{proof}[1][\hspace*{-.8ex}]{\noindent {\bf Proof #1:\;}}{\qqed}
\Mdef{\PH} {\Phi^H}
\Mdef{\PK} {\Phi^K}
\Mdef{\PL} {\Phi^L}
\Mdef{\PT} {\Phi^{\T}}
\Mdef{\ef}{E{\cF}_+}
\Mdef{\etf}{\widetilde{E}{\cF}}
\Mdef{\eg}{E{G}_+}
\Mdef{\etg}{\tilde{E}{G}}
\Mdef{\infl}{\mathrm{inf}}
\Mdef{\defl}{\mathrm{def}}
\Mdef{\res}{\mathrm{res}}
\Mdef{\ind}{\mathrm{ind}}
\Mdef{\coind}{\mathrm{coind}}
\Mdef{\univ}{\mathcal{U}}
\Mdef{\Fp}{\mathbb{F}_p}
\Mdef{\Zpinfty}{\Z /p^{\infty}}
\Mdef{\Zpadic}{\Z_p^{\wedge}}
\newcommand{\bi}{\begin{itemize}}
\newcommand{\be}{\begin{enumerate}}
\newcommand{\bc}{\begin{center}}
\newcommand{\bd}{\begin{description}}
\newcommand{\ei}{\end{itemize}}
\newcommand{\ee}{\end{enumerate}}
\newcommand{\ec}{\end{center}}
\newcommand{\ed}{\end{description}}
\newcommand{\lra}{\longrightarrow}
\newcommand{\Gspectra}{\mbox{$G$-{\bf spectra}}}
\Mdef{\we}{\mathbf{we}}
\Mdef{\fib}{\mathbf{fib}}
\Mdef{\cof}{\mathbf{cof}}
\Mdef{\BI}{\mathcal{BI}}
\Mdef{\A}{\mathbb{A}}
\Mdef{\B}{\mathbb{B}}
\Mdef{\C}{\mathbb{C}}
\Mdef{\D}{\mathbb{D}}
\Mdef{\E}{\mathbb{E}}
\Mdef{\T}{\mathbb{T}}
\Mdef{\F}{\mathbb{F}}
\Mdef{\G}{\mathbb{G}}
\Mdef{\I}{\mathbb{I}}
\Mdef{\N}{\mathbb{N}}
\Mdef{\Q}{\mathbb{Q}}
\Mdef{\R}{\mathbb{R}}
\Mdef{\bbS}{\mathbb{S}}
\Mdef{\Z}{\mathbb{Z}}
\Mdef{\bA}{\mathbb{A}}
\Mdef{\bB}{\mathbb{B}}
\Mdef{\bC}{\mathbb{C}}
\Mdef{\bD}{\mathbb{D}}
\Mdef{\bE}{\mathbb{E}}
\Mdef{\bF}{\mathbb{F}}
\Mdef{\bG}{\mathbb{G}}
\Mdef{\bH}{\mathbb{H}}
\Mdef{\bI}{\mathbb{I}}
\Mdef{\bJ}{\mathbb{J}}
\Mdef{\bK}{\mathbb{K}}
\Mdef{\bL}{\mathbb{L}}
\Mdef{\bM}{\mathbb{M}}
\Mdef{\bN}{\mathbb{N}}
\Mdef{\bO}{\mathbb{O}}
\Mdef{\bP}{\mathbb{P}}
\Mdef{\bQ}{\mathbb{Q}}
\Mdef{\bR}{\mathbb{R}}
\Mdef{\bS}{\mathbb{S}}
\Mdef{\bT}{\mathbb{T}}
\Mdef{\bU}{\mathbb{U}}
\Mdef{\bV}{\mathbb{V}}
\Mdef{\bW}{\mathbb{W}}
\Mdef{\bX}{\mathbb{X}}
\Mdef{\bY}{\mathbb{Y}}
\Mdef{\bZ}{\mathbb{Z}}
\Mdef{\cA}{\mathcal{A}}
\Mdef{\cB}{\mathcal{B}}
\Mdef{\cC}{\mathcal{C}}
\Mdef{\mcD}{\mathcal{D}} % Something funny about \cD.
\Mdef{\cE}{\mathcal{E}}
\Mdef{\cF}{\mathcal{F}}
\Mdef{\cG}{\mathcal{G}}
\Mdef{\mcH}{\mathcal{H}} % There's something funny about \cH: it 
\Mdef{\cI}{\mathcal{I}}
\Mdef{\cJ}{\mathcal{J}}
\Mdef{\cK}{\mathcal{K}}
\Mdef{\mcL}{\mathcal{L}}% There's something funny about \cL: it 
\Mdef{\cM}{\mathcal{M}}
\Mdef{\cN}{\mathcal{N}}
\Mdef{\cO}{\mathcal{O}}
\Mdef{\cP}{\mathcal{P}}
\Mdef{\cQ}{\mathcal{Q}}
\Mdef{\mcR}{\mathcal{R}}% There's something funny about \cR: it 
\Mdef{\cS}{\mathcal{S}}
\Mdef{\cT}{\mathcal{T}}
\Mdef{\cU}{\mathcal{U}}
\Mdef{\cV}{\mathcal{V}}
\Mdef{\cW}{\mathcal{W}}
\Mdef{\cX}{\mathcal{X}}
\Mdef{\cY}{\mathcal{Y}}
\Mdef{\cZ}{\mathcal{Z}}
\Mdef{\ca}{\mathcal{a}}
\Mdef{\ct}{\mathcal{t}}
\Mdef{\At}{\tilde{A}}
\Mdef{\Bt}{\tilde{B}}
\Mdef{\Ct}{\tilde{C}}
\Mdef{\Et}{\tilde{E}}
\Mdef{\Ht}{\tilde{H}}
\Mdef{\Kt}{\tilde{K}}
\Mdef{\Lt}{\tilde{L}}
\Mdef{\Mt}{\tilde{M}}
\Mdef{\Nt}{\tilde{N}}
\Mdef{\Pt}{\tilde{P}}
\Mdef{\tA}{\tilde{A}}
\Mdef{\tB}{\tilde{B}}
\Mdef{\tC}{\tilde{C}}
\Mdef{\tE}{\tilde{E}}
\Mdef{\tH}{\tilde{H}}
\Mdef{\tK}{\tilde{K}}
\Mdef{\tL}{\tilde{L}}
\Mdef{\tM}{\tilde{M}}
\Mdef{\tN}{\tilde{N}}
\Mdef{\tP}{\tilde{P}}
\Mdef{\ft}{\tilde{f}}
\Mdef{\xt}{\tilde{x}}
\Mdef{\yt}{\tilde{y}}
\Mdef{\Ab}{\overline{A}}
\Mdef{\Bb}{\overline{B}}
\Mdef{\Cb}{\overline{C}}
\Mdef{\Db}{\overline{D}}
\Mdef{\Eb}{\overline{E}}
\Mdef{\Fb}{\overline{F}}
\Mdef{\Gb}{\overline{G}}
\Mdef{\Hb}{\overline{H}}
\Mdef{\Ib}{\overline{I}}
\Mdef{\Jb}{\overline{J}}
\Mdef{\Kb}{\overline{K}}
\Mdef{\Lb}{\overline{L}}
\Mdef{\Mb}{\overline{M}}
\Mdef{\Nb}{\overline{N}}
\Mdef{\Ob}{\overline{O}}
\Mdef{\Pb}{\overline{P}}
\Mdef{\Qb}{\overline{Q}}
\Mdef{\Rb}{\overline{R}}
\Mdef{\Sb}{\overline{S}}
\Mdef{\Tb}{\overline{T}}
\Mdef{\Ub}{\overline{U}}
\Mdef{\Vb}{\overline{V}}
\Mdef{\Wb}{\overline{W}}
\Mdef{\Xb}{\overline{X}}
\Mdef{\Yb}{\overline{Y}}
\Mdef{\Zb}{\overline{Z}}
\Mdef{\db}{\overline{d}}
\Mdef{\hb}{\overline{h}}
\Mdef{\qb}{\overline{q}}
\Mdef{\rb}{\overline{r}}
\Mdef{\tb}{\overline{t}}
\Mdef{\ub}{\overline{u}}
\Mdef{\vb}{\overline{v}}
\Mdef{\hc}{\hat{c}}
\Mdef{\he}{\hat{e}}
\Mdef{\hf}{\hat{f}}
\Mdef{\hA}{\hat{A}}
\Mdef{\hH}{\hat{H}}
\Mdef{\hJ}{\hat{J}}
\Mdef{\hM}{\hat{M}}
\Mdef{\hP}{\hat{P}}
\Mdef{\hQ}{\hat{Q}}
\Mdef{\thetab}{\overline{\theta}}
\Mdef{\phib}{\overline{\phi}}
\Mdef{\uA}{\underline{A}}
\Mdef{\uB}{\underline{B}}
\Mdef{\uC}{\underline{C}}
\Mdef{\uD}{\underline{D}}
\Mdef{\bolda}{\mathbf{a}}
\Mdef{\boldb}{\mathbf{b}}
\Mdef{\bfD}{\mathbf{D}}
\Mdef{\fm}{\frak{m}}
\Mdef{\fp}{\frak{p}}
\Mdef{\eps}{\epsilon}
\newcommand{\adjointtriple}[5]{
\diagram
#1 \ar[rr]|-{#3} &&
#5  
\llto<1.2ex>^{#4}
\llto<-1.2ex>_{#2} 
\enddiagram}
\newcommand{\ib}{\overline{i}}
\newcommand{\Gbar}{\overline{G}}
\newcommand{\Hbar}{\overline{H}}
\newcommand{\HBG}{H^*(BG)}
\newcommand{\HBH}{H^*(BH)}
\newcommand{\HBGe}{H^*(BG_e)}
\newcommand{\HBHe}{H^*(BH_e)}
\newcommand{\HBGtw}{H^*(BG_e)[\Gbar]}
\newcommand{\HBHtw}{H^*(BH_e)[\Hbar]}
\newcommand{\freeGspectra}{\mbox{free-$G$-spectra}}
\newcommand{\freeHspectra}{\mbox{free-$H$-spectra}}
\renewcommand{\Gspectra}{\mbox{$G$-spectra}}
\newcommand{\Hspectra}{\mbox{$H$-spectra}}
\newcommand{\HBGmod}{\mbox{$\HBG$-modules}}
\newcommand{\HBHmod}{\mbox{$\HBH$-modules}}
\newcommand{\cellCEGmod}{\mbox{cell-$C^*(EG)$-mod-$G$-spectra}}
\newcommand{\cellCBGmod}{\mbox{cell-$C^*(BG)$-mod-spectra}}
\newcommand{\torsHBGmod}{\mbox{torsion-$\HBG$-modules}}
\newcommand{\DGtorsHBGmod}{\mbox{DG-torsion-$\HBG$-modules}}
\newcommand{\torsHBHmod}{\mbox{torsion-$\HBH$-modules}}
\newcommand{\DGtorsHBHmod}{\mbox{DG-torsion-$\HBH$-modules}}
\newcommand{\DGtorsHBGtwmod}{\mbox{DG-torsion-$\HBGtw$-modules}}
\newcommand{\DGtorsHBHtwmod}{\mbox{DG-torsion-$\HBHtw$-modules}}
\newcommand{\modQG}{\mbox{mod-$\Q G$}}
\newcommand{\modQH}{\mbox{mod-$\Q H$}}
\newcommand{\modCG}{\mbox{mod-$C_*(G)$}}
\newcommand{\DGH}{\mathbb{D}(G|H)}
\newcommand{\DGeHe}{\mathbb{D}(G_e|H_e)}
\newcommand{\DRS}{\mathbb{D}(R|S)}
\newcommand{\RAmod}{\mbox{$R[A]$-modules}}
\newcommand{\SBmod}{\mbox{$S[B]$-modules}}
\begin{document}
\title{Algebraic models of change of groups in rational stable
  equivariant homotopy theory}

\author{J.P.C.Greenlees}
\address{School of Mathematics and Statistics, Hicks Building, 
Sheffield S3 7RH. UK.}
\email{j.greenlees@sheffield.ac.uk}
\date{}

\begin{abstract}
Shipley and the author have given an algebraic model 
for free rational $G$-spectra for a compact Lie group $G$  \cite{gfreeq, gfreeq2}.  
 In the present note we describe, at  the level of homotopy categories, the algebraic models for induction, restriction and
coinduction relating free rational $G$-spectra and free rational
$H$-spectra for a subgroup $H$ of $G$.
\end{abstract}

%\thanks{I am grateful }
\maketitle

\tableofcontents

\section{Introduction}

\subsection{Change of groups for spectra}
We are concerned with change of groups functors, so we suppose that
$i: H \lra G$ is the inclusion of a subgroup in a compact Lie group
$G$. The restriction functor $i^*: \Gspectra \lra \Hspectra$ has a
left adjoint $i_*$ and a right adjoint $i_!$ defined on $H$-spectra
$Y$ by $i_*(Y)=G_+\sm_HY$ (induction) and $i_!(Y)=F_H(G_+, Y)$
(coinduction). Altogether we have an adjoint triple
$$\adjointtriple{\Gspectra}{i_*}{i^*}{i_!}{\Hspectra},   $$
and this passes to homotopy categories.

There are a number of cases  where categories of rational $G$-spectra have been shown to be equivalent
to algebraic categories, and one may then ask for the algebraic
counterparts of the change of groups functors. The present paper
considers the case of free spectra, where the algebraic models are at
their simplest. This gives considerable insight in the general case,
since  the nature of the models in the more general case is that they
are built  from one contributions at each closed subgroup, each of
which has the character of the free model (see \cite{Kedziorek,
  AGtoral} for examples of this). 

\subsection{The model for free spectra}
The homotopy category of free $G$-spectra is the full subcategory of
spectra $X$ for which the map $EG_+\sm X \lra X$ is an
equivalence. The above adjoint triple passes to free spectra and gives the adjoint
triple that we study here.

For any compact Lie group $G$, there is an algebraic model of free
rational $G$-spectra \cite{gfreeq, gfreeq2}. If $G$ is connected this is 
the category of differential graded (DG) torsion modules over the
polynomial ring $H^*(BG)$ (rational coefficients understood
throughout).  More generally if $G$ has identity component $G_e$ and component group $\Gbar =G/G_e\cong \pi_0(G)$ we note that $\Gbar$ acts via conjugation on $H^*(BG_e)$ and we may form the twisted group ring $\HBGtw$. A torsion $\HBGtw$-module is one which is torsion as a module over $H^*(BG_e)$ (i.e., its localization at any non-maximal prime ideal is trivial). There is a Quillen equivalence \cite{gfreeq2}
$$\freeGspectra/\Q \simeq \DGtorsHBGtwmod ; $$
in fact, a different Quillen equivalence was given in \cite{gfreeq} in the connected case, and the author expects this too to extend to the general case.

\subsection{Contribution}
The purpose of the present note is to identify the algebraic models of
the change of groups functors, at least at the level of the homotopy
category. We expect that a more precise analysis
will show that (with a suitable choice of models) the change of groups Quillen
adjunctions in topology correspond (under suitably chosen Quillen equivalences
with the algebraic models) to the Quillen adjunctions we identify in algebra.

In Theorem \ref{thm:main} we will identify the algebraic model of the
above adjoint triple, but there are two significant lessons along the
way.  Firstly, the right algebraic models are less easy to guess
than the author expected, even at the crudest
level. Secondly, we show that the algebraic models depend on which of the
two Quillen equivalences (i.e., from  \cite{gfreeq} or from \cite{gfreeq2}) are used. 

 The point of writing this elementary note separately is that 
the proofs are simple formal manipulations whilst the conclusions 
significantly illuminate the wider landscape. Of course one expects
this to be helpful in lifting  these results to the model categorical
level, but the usefulness goes beyond that. 
For example, it was helpful in guiding the search for certain functors
in \cite{Kedziorek} and in identifying the algebraic model for toral
spectra  \cite{AGtoral}. The author found it illuminated phenomena
from \cite{BGca, kappaI, BGstrat} and that it was a useful guide in dealing
with models of categories of modules over profinite groups. Finally,
it explained a delicate point in comparing the results of 
\cite{gfreeq} and \cite{gfreeq2}.

\subsection{A mismatch}
A moment's thought shows that it is not straightforward to write down
the models. Suppose for the present that $G$ and $H$ are connected and
write 
$$\theta =i^*: \HBG \lra \HBH$$
for the induced map in cohomology. The ring homomorphism $\theta$ gives a restriction functor $\theta^*: \HBHmod \lra \HBGmod$ with left adjoint $\theta_*$ and right adjoint $\theta_{!}$ given on $\HBG$-modules $M$ by $\theta_*( M )=\HBH \tensor_{\HBG} M$ (extension of scalars) and 
$\theta_*( M )=\Hom_{\HBG}(\HBH , M)$ (coextension of scalars), altogether giving
$$\adjointtriple{\HBHmod}{\theta_*}{\theta^*}{\theta_!}{\HBGmod}, $$
and these functors pass to torsion modules and again give an adjoint triple. 

We note that in topology two of these functors go from $H$-spectra to
$G$-spectra, whereas in algebra two functors go from $\HBG$-modules to
$\HBH$-modules. It is clear these two triples cannot correspond to
each other, and that we must find other functors in algebra to model
the functors in topology. Our technique in the various cases is to choose the most accessible of the three functors $i_*, i^* $ and $i_!$ and find
its algebraic model. The models of the other two follow by taking adjoints.

\subsection{Conventions}
This  paper is a contribution to rational equivariant stable homotopy
theory, so we are concerned with algebraic models of categories of rational
equivariant spectra. We work throughout at the level of homotopy
categories and our conclusions are independent of models provided well-behaved
change of groups functors exist. In particular our results apply to
orthogonal spectra \cite{MM} and to the spectra of  \cite{lmsm}. Given
a choice of model for $G$-spectra, there are numerous models
for free $G$-spectra, and we choose to localize with respect to
non-equivariant homotopy. Similarly, models for rational spectra are
obtained by localizing with respect to rational homotopy.

Since the algebraic model is only available rationally, our notation
will leave rationalization implicit: all algebras are assumed to be $\Q$-algebras and all cohomology
 has coefficients in $\Q$.

\section{The case of finite groups}
We suppose first that $G$ and $H$ are finite. There are two obvious equivalences 
$$\freeGspectra \simeq \mbox{DG-$\modQG$}, $$
given by hom or tensor Morita equivalences. We are using the fact that we have an equivalence  $\Q G\simeq F(G_+, G_+)^G$ identifying the endomorphism ring of the generator $G_+$ with  an Eilenberg-MacLane spectrum. 
Now we may use the hom Morita equivalence and let a $G$-spectrum $X$ be mapped to the right $\Q G$-module
$E_GX:=F(G_+, X)^G$, or we may use the tensor Morita equivalence in which it is mapped to the left $\Q G$-module 
$G_+\sm_G X$, which is made into a right $\Q G$-module in the usual way. These are naturally equivalent.

Now the inclusion $i: H \lra G$ induces a map  $\ib : \Q H \lra \Q G$ of group rings, and hence restriction $\ib^*: \modQG \lra \modQH$ with left adjoint $\ib_*$ induction and right adjoint coinduction as usual. We observe
$$E_Hi^*(X) = F(H_+,i^*X)^H=\ib^*F(G_+,X)^G=\ib^*E_GX. $$

We conclude that in this case, the spectrum level restrictions $i_*\vdash i^*\vdash i_!$ correspond to the functors
$\ib_*\vdash \ib^*\vdash \ib_!$ just as expected.   

For finite groups, we note that this applies more generally; the group homomorphism $i: H \lra G$ need not be a monomorphism. We may pull back a free $G$-spectrum to obtain an $H$-spectrum. If $i$ is not a monomorphism 
the resulting spectrum need not be free, but we can make it so, and the functor $i^*: \freeGspectra \lra \freeHspectra$
is induced by restriction along the homomorphism $\ib: \Q H \lra \Q
G$. The left and right adjoint  of $\ib^*$ are extension and
coextension of scalars, and by Maschke's Theorem these are
automatically homotopy invariant.

\section{The case of connected groups}
Now suppose that $G$ and $H$ are connected. It turns out that the
equivalence from \cite{gfreeq} and \cite{gfreeq2} give {\em different}
algebraic models for change of groups functors. We will describe three
equivalences between topology and algebra, the first two 
give the same models for change of groups, and the third (namely that of \cite{gfreeq}) gives a different model. 

\subsection{Eilenberg-Moore correspondents}
\label{subsec:EM}
This subsection discusses the equivalence of \cite{gfreeq2}. 

In this case, the equivalence 
$$\freeGspectra \simeq \DGtorsHBGmod$$
 is a composite of two equivalences. The first is a change of rings
 along $\bbS \lra  DEG_+=: C^*(EG)$, where $\bbS$ is the sphere
 spectrum. Since $G$-spectra are simply $\bbS$-modules, and since the
 change of rings map is a non-equivariant equivalence, 
 cellularization with respect to $G_+$ gives a Quillen equivalence 
$\freeGspectra \simeq \cellCEGmod$ (see \cite[Section 4]{cellprin} for
full details). In the  second we pass to fixed points to give
$\cellCEGmod \simeq \cellCBGmod$, since $C^*(EG)^G\simeq C^*(BG)$
\cite[Section 8]{modulefps}. Since we are working rationally,
$C^*(BG)$ is formal and  we will work with $\HBG$ rather than $C^*(BG) $-throughout. 

In the first we have only changed the ring we are working over, so
this plays no direct role in the change of groups. In the second equivalence we need to make a short manipulation. 

\begin{lemma}
\label{lem:EMcorr}
$i_! $ corresponds to $\theta^*$. 
\end{lemma}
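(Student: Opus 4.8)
The strategy is to unwind the definition of the equivalence $\freeGspectra \simeq \DGtorsHBGmod$ from \cite{gfreeq2}, which is a composite of cellularization along $\bbS \to C^*(EG)$ followed by passage to fixed points $C^*(EG) \rightsquigarrow C^*(BG) \simeq \HBG$. Since the first step only changes the ambient ring and is a non-equivariant equivalence compatible with $i^*$ (as already noted in the excerpt, and as in the finite-group discussion where $E_H i^*(X) = \ib^* E_G X$), the content of the lemma is concentrated in the second step. So first I would reduce to analyzing how $i_!$ interacts with the fixed-point passage $C^*(EG)^G \simeq C^*(BG)$.

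The key computation is the following chain of identifications at the level of spectra with group action. For an $H$-spectrum $Y$, coinduction gives $i_!(Y) = F_H(G_+, Y)$. Applying the $C^*(E(-))$-module fixed-point functor, one has $C^*(EG) \sm_{?} F_H(G_+, Y)$ behaving, after taking $G$-fixed points, like $F_H(G_+, -)^G \simeq (-)^H$ composed with the appropriate cochain functor; concretely the expected identity is
$$ C^*(EG \times_G G/H) \simeq C^*(EH) \quad\text{as } C^*(BG)\text{-algebras via } \theta: \HBG \to \HBH,$$
using $EG \times_G G/H \simeq BH$ and the fact that the map $BH \to BG$ induces $\theta$ on cohomology. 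Restriction/coinduction along $i$ then matches restriction along $\theta$: the right adjoint $i_!$ on the topological side, which lands in $G$-spectra and after the equivalence becomes a functor $\DGtorsHBHmod \to \DGtorsHBGmod$, is computed to be precisely $\theta^*$ (viewing an $\HBH$-module as an $\HBG$-module along $\theta$), because $F_H(G_+, Y)^G$ is just $Y^H$ re-regarded with its residual structure. The torsion condition is preserved since $\theta^*$ of a torsion $\HBH$-module is torsion over $\HBG$ (its localizations away from the maximal ideal still vanish, as $\theta$ is local in the relevant sense — $\HBH$ is module-finite-ish over $\HBG$ up to the nilpotent situation, and in any case $H^*(BG_e) \to H^*(BH_e)$ sends the maximal ideal into the maximal ideal).

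The main obstacle I anticipate is making the middle identification $C^*(EG)^G\text{-module fixed points of } i_!(Y) \simeq \theta^* (\text{of the } \HBH\text{-model of } Y)$ genuinely precise rather than heuristic: one must check that the residual right-module structure over $C^*(BG)$ on $F_H(G_+,Y)^G$, transported through the formality identification $C^*(BG) \simeq \HBG$, is exactly restriction of scalars along $\theta$ and not some twist. This amounts to chasing the adjunction $(i^*, i_!)$ through the fixed-point equivalence of \cite[Section 8]{modulefps} and invoking that $i^*$ already corresponds to $\theta^*$ on module categories in the evident way — so by uniqueness of adjoints, its topological right adjoint $i_!$ must correspond to whatever is right adjoint to $\theta^*$; but here the twist is that $i_!$ does \emph{not} become the algebraic $\theta_!$ (coextension of scalars) because the variance is wrong, and instead one discovers it is literally $\theta^*$ again, now read as going $\HBHmod \to \HBGmod$. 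Pinning down exactly why the right adjoint collapses to restriction-of-scalars — ultimately because $C^*(EG)^G$-module fixed points convert the topological coinduction $F_H(G_+,-)$ into a forgetful functor — is the crux, and everything else is formal manipulation with adjoints and the two-step structure of the equivalence.
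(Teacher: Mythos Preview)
Your proposal is correct and takes the same approach as the paper: the heart of the argument is the identity $F_H(G_+,Z)^G \simeq Z^H$, which the paper records as the single displayed line $(i_!(DEH_+\sm Y))^G = F_H(G_+, DEH_+\sm Y)^G = (DEH_+\sm Y)^H$. The additional material in your plan---the $BH\simeq EG\times_G G/H$ identification, the worry about twists, and especially the adjoint-uniqueness reasoning invoking that ``$i^*$ already corresponds to $\theta^*$''---is unnecessary, and the last of these is backwards: it is precisely this lemma that, by passing to left adjoints, shows $i^*$ corresponds to $\theta_*$ rather than to $\theta^*$.
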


\begin{proof}
We calculate
$$(i_! (DEH_+ \sm X))^G=F_H(G_+, DEH_+ \sm Y)^G=i^*(DEH_+ \sm Y)^H. $$
\end{proof}

We will infer the correspondents of $i^*$ and $i_*$ in Subsection \ref{subsec:EMcorr} below. 

\subsection{Koszul correspondents I}
\label{subsec:KoszulI}
This case is closely related to that of \cite{gfreeq} described in Subsection \ref{subsec:KoszulII}, but significantly different. 

In this case the equivalence 
$$\freeGspectra \simeq \DGtorsHBGmod$$ 
is again a composite of two equivalences. The first is  the Morita
equivalence  $\freeGspectra \simeq \modCG$ \cite{tec,SS}, where $C_*(G)$ is now the
group ring spectrum  of $G$ over the rational sphere spectrum
(literally $\Q \sm G_+$, so that we could have continued with the
notation $\Q G$ from the case of finite groups); the equivalence is given again by taking a $G$-spectrum $X$ to the right $C_*(G)$-module $E_G(X) =F(G_+, X)^G$. The second equivalence is between modules over $C_*(G)$ and modules over $C^*(BG)=F(BG_+, \Q)$; this is another Morita equivalence, where a $C_*(G)$-module  $P$ is taken to $E'_G(P)=\Hom_{C_*(G)}(\Q , P)$, where we note that 
this is a module over $\Hom_{C_*(G)}(\Q, \Q)=\Hom_{C_*(G)}(C_*(EG), C_*(EG))\simeq C^*(BG)$. 

For the first step, it follows as in the finite case that $i^*$ correpondes to $\ib^*$. The more interesting step is the second. 

\begin{lemma}
\label{lem:KoszulIcorr}
$i_!$ corresponds to $\theta^*$. 
\end{lemma}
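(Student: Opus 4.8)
The plan is to mimic the manipulation in Lemma \ref{lem:EMcorr}, but now carried out one Morita equivalence further along, at the level of modules over $C^*(BG)$. Recall the second equivalence sends a $C_*(G)$-module $P$ to $E'_G(P) = \Hom_{C_*(G)}(\Q, P)$. So for an $H$-spectrum $Y$ I must compute $E'_G\bigl(E_G(i_!Y)\bigr) = \Hom_{C_*(G)}\bigl(\Q,\, F(G_+, Y)^G\bigr)$ and identify it, as a $C^*(BG)$-module, with $\theta^*$ applied to $E'_H(E_H(Y)) = \Hom_{C_*(H)}(\Q, F(H_+, Y)^H)$, where $\theta = i^* : C^*(BG)\to C^*(BH)$ (formally $\HBG \to \HBH$).

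First I would use the coinduction--restriction adjunction at the spectrum level, $F(G_+, Y)^G \simeq F(H_+, i^*(-))^H$ evaluated appropriately, or more precisely the observation already used in Lemma \ref{lem:EMcorr} that $F_H(G_+, Y)^G = i^*(Y)^H$-type identities let one rewrite $E_G(i_!Y)$ in terms of $E_H(Y)$ regarded through the map $\ib: C_*(H)\to C_*(G)$. Concretely, $E_G(i_!Y) = F(G_+, F_H(G_+,Y))^G \simeq F(H_+, Y)^H = E_H(Y)$, and one checks this identification is one of $C_*(H)$-modules where the $C_*(G)$-module structure on the left corresponds to the module structure on the right obtained by \emph{restriction} along $\ib$ — i.e. $E_G \circ i_! \simeq \ib_*(E_H(-))$ would be wrong; rather the right statement is that $i_!$ at spectrum level matches a \emph{restriction}-type operation, consistent with Lemma \ref{lem:EMcorr}. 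Then I would apply $\Hom_{C_*(G)}(\Q, -)$ and use the standard base-change identity $\Hom_{C_*(G)}(\Q, \ib^*N) \simeq \Hom_{C_*(H)}(C_*(G)\otimes_{C_*(H)}\Q,\, N) \simeq \theta^*\Hom_{C_*(H)}(\Q, N)$, the last step using that $C_*(G)\otimes_{C_*(H)}\Q \simeq \Q$ (since $G/H$-type collapse rationally, or more honestly since $C_*(EG)\otimes_{C_*(H)}\Q$ computes the relevant bar construction) and that the resulting ring map $\Hom_{C_*(G)}(\Q,\Q)\to \Hom_{C_*(H)}(\Q,\Q)$ is precisely $\theta: C^*(BG)\to C^*(BH)$.

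The main obstacle is the middle base-change step: verifying that $\Hom_{C_*(G)}(\Q, -)$ intertwines restriction along $\ib: C_*(H)\to C_*(G)$ with restriction along $\theta: C^*(BG)\to C^*(BH)$, rather than with extension of scalars. This requires knowing that $\Q$ (i.e. $C_*(EG)$ as a $C_*(G)$-module, equivalently the trivial module) pulls back under $\ib$ to $\Q$ as a $C_*(H)$-module — which holds because $EG$ restricted to $H$ is still contractible, hence a model for $EH$ — and then that the induced map on derived endomorphism rings $\Hom_{C_*(G)}(\Q,\Q) \to \Hom_{C_*(H)}(\ib^*\Q, \ib^*\Q)$ agrees with $C^*(Bi): C^*(BG)\to C^*(BH)$. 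Both are consequences of functoriality of the bar construction $C_*(G)\mapsto C^*(BG)$ in the group, but stating them cleanly at the level of the homotopy category, with the module structures tracked, is where the care is needed; the rest is formal adjunction bookkeeping parallel to Subsection \ref{subsec:EM}.
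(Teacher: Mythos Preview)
Your proposal contains a genuine gap in the middle step, and it stems from misidentifying what $i_!$ becomes after the \emph{first} Morita equivalence. The paper records (just before the lemma) that under $E_G$ the restriction $i^*$ corresponds to $\ib^*$; taking right adjoints therefore gives that $i_!$ corresponds to $\ib_!$ (coextension of scalars along $\ib: C_*(H)\to C_*(G)$), not to any ``restriction-type operation''. Your attempted direct computation $E_G(i_!Y)\simeq E_H(Y)$ is not correct in general: non-equivariantly $i_!Y=F_H(G_+,Y)$ is a function spectrum over the homogeneous space $G/H$, not simply $Y$.

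Because of this, the base-change identity you then invoke goes wrong in two places. First, the formula $\Hom_{C_*(G)}(\Q,\ib^*N)\simeq \Hom_{C_*(H)}(C_*(G)\otimes_{C_*(H)}\Q, N)$ is not a valid adjunction: $\ib^*$ takes $C_*(G)$-modules to $C_*(H)$-modules, so $\Hom_{C_*(G)}(\Q,\ib^*N)$ does not even parse. Second, your assertion $C_*(G)\otimes_{C_*(H)}\Q\simeq \Q$ is false; this tensor product is $C_*(G/H)$, which is rarely rationally trivial (take $G=SU(2)$, $H=\bT$, so $G/H=S^2$). The paper's argument avoids all of this by applying the correct adjunction $(\ib^*,\ib_!)$ directly:
\[
E'_G(\ib_!Q)=\Hom_{C_*(G)}(\Q,\ib_!Q)\;\simeq\;\Hom_{C_*(H)}(\ib^*\Q,Q)\;=\;\Hom_{C_*(H)}(\Q,Q)=\theta^*E'_H(Q),
\]
using only that the trivial $C_*(G)$-module restricts to the trivial $C_*(H)$-module and that the induced map on endomorphism rings is $\theta$. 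That last functoriality point is indeed the one thing that needs checking, and you identified it correctly; but the route to get there should go through $\ib_!$, not through a spurious collapse of $C_*(G/H)$.
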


\begin{proof}
Since $i_!$ corresponds to $\ib_!$ it suffices to deal with modules over the chains on groups.   
For a $C_*(H)$-module $Q$, we  calculate 
$$E'_G(\ib_!Q) =\Hom_{C_*(G)}(\Q , \ib_! Q)=\theta^* \Hom_{C_*(H)}(\Q, Q). $$
\end{proof}

We will infer the correspondents of $i^*$ and $i_*$ in Subsection \ref{subsec:EMcorr} below. 

\subsection{Koszul correspondents II}
\label{subsec:KoszulII}
This case was treated in \cite{gfreeq}, but we recap here to show the difference from the case of Subsection 
\ref{subsec:KoszulI}.

In this case the equivalence 
$$\freeGspectra \simeq \DGtorsHBGmod$$ 
is a again a composite of two equivalences. The first is   the Morita equivalence  $\freeGspectra \simeq \modCG$,
precisely as in Subsection \ref{subsec:KoszulI} and it follows as before that $i^*$ correponds to $\ib^*$. 

However for the second equivalence we use the reverse Morita
equivalence \cite{tec,SS}.  This time, a 
right $C_*(G)$-module  $P$ is taken to $T_G(P)=P \tensor_{C_*(G)} \Q$,  where we note that 
this is a left module over $\Hom_{C_*(G)}(\Q, \Q)=\Hom_{C_*(G)}(C_*(EG), C_*(EG))\simeq C^*(BG)$.

\begin{lemma}
\label{lem:KoszulIIcorr}
$i_*$ corresponds to $\theta^*$. 
\end{lemma}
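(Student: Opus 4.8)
The plan is to mimic the proof of Lemma~\ref{lem:KoszulIcorr}, but using the tensor (reverse) Morita equivalence $T_G$ in place of the hom Morita equivalence $E'_G$. As in the Koszul~I case, the first of the two equivalences is the Morita equivalence $\freeGspectra \simeq \modCG$, and exactly as in the finite case $i^*$ corresponds to $\ib^*$; hence (by taking left adjoints) $i_*$ corresponds to $\ib_*$, extension of scalars along $\ib : C_*(H) \lra C_*(G)$. So it suffices to understand what $T_G \circ \ib_*$ does to a right $C_*(H)$-module $Q$, and to match it with $\theta^*$ applied to $T_H(Q)$.

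First I would write down the composite explicitly: for a right $C_*(H)$-module $Q$,
$$T_G(\ib_* Q) = (Q \tensor_{C_*(H)} C_*(G)) \tensor_{C_*(G)} \Q \iso Q \tensor_{C_*(H)} \Q = T_H(Q),$$
using associativity of the tensor product and cancellation of $C_*(G)$. The only real content is to check that this isomorphism is compatible with the relevant $C^*(BG)$- and $C^*(BH)$-module structures, i.e.\ that the left $C^*(BG)$-action on $T_G(\ib_*Q)$ coincides, under this isomorphism, with the $C^*(BG)$-action on $T_H(Q)$ obtained by restricting its natural $C^*(BH)$-action along $\theta : C^*(BG) \lra C^*(BH)$. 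Here I would recall that $C^*(BG) \simeq \Hom_{C_*(G)}(\Q,\Q)$ acts on $P \tensor_{C_*(G)} \Q$ through the right-hand $\Q$ factor, and similarly for $H$; the map $\theta$ is precisely the one induced by $\ib$ on these endomorphism rings (equivalently, it is $i^* = \theta$ in cohomology, compatibly with the identification $C^*(B(-)) = F(B(-)_+,\Q)$ and formality). Chasing the action of $\phi \in C^*(BG) = \Hom_{C_*(G)}(\Q,\Q)$ through the isomorphism above then shows it agrees with the action of $\ib^\sharp(\phi) = \theta(\phi) \in C^*(BH) = \Hom_{C_*(H)}(\Q,\Q)$ on $T_H(Q)$, which is exactly the statement that $i_*$ corresponds to $\theta^*$.

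The main obstacle is bookkeeping rather than substance: one must be careful that $\Q$ really means $C_*(EG) \simeq C_*(EH) \simeq \Q$ as the relevant bimodule, so that the cancellation $C_*(G) \tensor_{C_*(G)} \Q \simeq \Q$ is the genuine base change along $\ib$ of the $C_*(H)$-module $\Q = C_*(EH)$, and that this base change is what realizes $\theta : C^*(BG) \lra C^*(BH)$ on endomorphism rings. Once that identification is in place the rest is the one-line associativity calculation. As with the previous two lemmas, the correspondents of $i^*$ and $i_!$ are then read off by passing to adjoints, which I would defer to Subsection~\ref{subsec:EMcorr}.
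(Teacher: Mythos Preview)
Your proof is correct and follows essentially the same route as the paper: the core is the associativity calculation $T_G(\ib_*Q) = (Q\tensor_{C_*(H)}C_*(G))\tensor_{C_*(G)}\Q \cong \theta^*(Q\tensor_{C_*(H)}\Q)$, which is exactly the paper's one-line proof, and you add (correct) commentary on why the module structures match. One small slip in your closing remark: the adjoints here are treated in Subsection~\ref{subsec:KoszulIIcorr}, not Subsection~\ref{subsec:EMcorr}, since in the Koszul~II equivalence $i^*$ corresponds to $\theta_!$ (and $i_!$ to $\theta^!$), not to $\theta_*$.
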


\begin{proof}
For a $C_*(H)$-module $Q$, we  calculate 
$$T_G(\ib_*Q) =(Q\tensor_{C_*(H)}C_*(G))\tensor_{C_*(G)}\Q=\theta^* (Q\tensor_{C_*(H)}\Q). $$
\end{proof}

We will infer the correspondents of $i^*$ and $i_!$ in Subsection \ref{subsec:KoszulIIcorr} below.
 
\subsection{Iterated adjoints of restriction of scalars}
We consider a map $\theta : R\lra S$ of commutative rings. The example we have in mind is 
$R=\HBG, S=\HBH$ and $\theta =i^*$. 

 The ring homomorphism induces a restriction of scalars $\theta^*$, which always has a left adjoint 
$\theta_*$ defined by $\theta_*(M)=S\tensor_R M$ and a right adjoint $\theta_!$ defined by 
$\theta_!(M)=\Hom_R(S, M)$.  If $S$ is small as an $R$-module there are further adjoints. 
To describe these,  we first define the {\em relative dualizing complex} 
$$\DRS =\Hom_R(S,R) , $$
which we note is an $(R,S)$-bimodule. 

\begin{lemma}
\label{lem:adjstring}
If $S$ is small as an $R$ module we have a string of adjunctions
$$\theta^{\dagger}\vdash \theta_* \vdash \theta^* \vdash \theta_! \vdash \theta^!$$
defined on $R$-modules $M$ and $S$-modules $N$  by 
\begin{itemize}
\item $\theta^{\dagger} (N)=\DRS \tensor_S N$
\item $\theta_*(M)=S\tensor_RM$
\item $\theta^*(N)$ is $N$ with restricted action
\item $\theta_!(M)=\Hom_R(S,M)$
\item $\theta^!(N)=\Hom_S(\DRS ,N)$
\end{itemize}
\end{lemma}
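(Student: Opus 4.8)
The plan is to establish the five-term string of adjunctions by pairing up consecutive functors and verifying each adjunction by a direct Hom-computation, using standard tensor–Hom adjunction together with the smallness hypothesis on $S$. The three ``inner'' adjunctions $\theta_* \vdash \theta^* \vdash \theta_!$ are the classical extension/restriction/coextension of scalars and require no hypothesis on $S$; I would simply recall that $\Hom_S(S\tensor_R M, N) \cong \Hom_R(M, \theta^* N)$ and $\Hom_R(\theta^* N, M) \cong \Hom_S(N, \Hom_R(S,M))$, which are instances of tensor–Hom adjunction along $\theta$. So the real content is the two outer adjunctions $\theta^{\dagger} \vdash \theta_*$ and $\theta_! \vdash \theta^!$, and here is where smallness of $S$ as an $R$-module is used.

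For $\theta_! \vdash \theta^!$: given an $R$-module $M$ and an $S$-module $N$, I want a natural isomorphism $\Hom_S(N, \theta^! M) \cong \Hom_R(\theta_! M, \theta^* N)$, i.e.
\[
\Hom_S\bigl(N, \Hom_S(\DRS, M')\bigr) \ \cong\ \Hom_R\bigl(\Hom_R(S,M), \theta^* N\bigr),
\]
wait — more carefully, $\theta^!$ takes $R$-modules to $S$-modules, so I should show $\Hom_R(\theta_!(M), N) \cong \Hom_S(\theta^* {}^{-1}\!\ldots$; the cleanest route is to first show that when $S$ is small over $R$ there is a natural equivalence $\theta_!(M) = \Hom_R(S,M) \simeq \DRS \tensor_R M$ (this is exactly the statement that the dualizing complex $\DRS = \Hom_R(S,R)$ computes $\Hom_R(S,-)$ after tensoring, which holds because $S$ is small, hence $\Hom_R(S,-)$ commutes with the relevant colimits and one reduces to $S=R$). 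Granting that, $\theta_!(M) \simeq \DRS \tensor_R M$ carries its $S$-action through the right $S$-module structure on $\DRS$, and then $\theta_! \vdash \theta^!$ becomes the tensor–Hom adjunction $\Hom_R(\DRS \tensor_S N', M) \cong \Hom_S(N', \Hom_R(\DRS, M))$ — but here one must identify $\Hom_R(\DRS, M)$ with $\Hom_S(\DRS, N)$-type expressions; I will instead phrase it directly as: $\theta^! (N) = \Hom_S(\DRS, N)$ is right adjoint to $N' \mapsto \DRS \tensor_{?} N'$, and since $\theta_!(M) \simeq \DRS \tensor_R M \cong \DRS \tensor_S (S \tensor_R M) = \DRS \tensor_S \theta_*(M)$, one chains the adjunction $(\DRS \tensor_S -) \dashv \Hom_S(\DRS,-)$ with $\theta_* \dashv \theta^*$. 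Symmetrically, for $\theta^{\dagger} \vdash \theta_*$, I use $\theta^{\dagger}(N) = \DRS \tensor_S N$ and the smallness-driven equivalence $\Hom_R(\DRS, M) \simeq S \tensor_R M = \theta_*(M)$ (bidualizing: $\Hom_R(\Hom_R(S,R), R) \simeq S$ when $S$ is small), so that $\Hom_R(\theta^{\dagger}(N), M) = \Hom_R(\DRS \tensor_S N, M) \cong \Hom_S(N, \Hom_R(\DRS, M)) \simeq \Hom_S(N, \theta_*^{?}\!\ldots)$ — again one must be careful that the target lands in $R$-modules; the resolution is that $\theta^{\dagger}$ is left adjoint to $\theta_*$ means $\Hom_S(\theta^{\dagger}(N), M')$ for an $S$-module... no: $\theta^{\dagger}$ goes $S\text{-mod} \to R\text{-mod}$, so I need $\Hom_R(\theta^{\dagger} N, M) \cong \Hom_S(N, \theta_*^{op}\ldots)$; the correct pairing is $\theta^{\dagger} \dashv \theta_*$ meaning $\Hom_S(N, M\text{-restricted})$... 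I will write it as: for $R$-module $M$ and $S$-module $N$, $\Hom_R(\DRS\tensor_S N, M) \cong \Hom_S(N, \Hom_R(\DRS, M))$ by tensor–Hom, and $\Hom_R(\DRS,M) \simeq S\tensor_R M$ as $S$-modules by smallness, giving $\Hom_R(\theta^{\dagger}N, M)\cong \Hom_S(N, \theta_*M)$ — so in fact $\theta^{\dagger}\dashv\theta_*$ as claimed, reading $\theta_*$ here as landing in $S$-modules and $\theta^{\dagger}$ taking $S$-modules to $R$-modules.

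The main obstacle is the pair of ``base change for the dualizing complex'' equivalences $\Hom_R(S,M)\simeq \DRS\tensor_R M$ and $\Hom_R(\DRS, M)\simeq S\tensor_R M$, valid precisely because $S$ is small (perfect/dualizable) over $R$; everything else is formal adjunction bookkeeping. I would prove the first by noting that both sides are exact functors of $M$ commuting with arbitrary (homotopy) colimits — the left side because $S$ is small — and they agree for $M=R$, so they agree for all $M$ by building $M$ out of free modules; the second (biduality $\DRS^{\vee}\simeq S$) follows by applying $\Hom_R(-,R)$ and using that a small module is reflexive. I would also remark that these identifications are compatible with the $(R,S)$-bimodule structures, which is what makes the $S$-module structures on all five functors the asserted ones. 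One should double-check the ``small'' hypothesis is exactly what is needed: it guarantees $\Hom_R(S,-)$ preserves coproducts, which is the crux; in the intended application $S=\HBH$ over $R=\HBG$ this smallness is the finiteness that holds when $H$ has finite index, or is otherwise imposed.
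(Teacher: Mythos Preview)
Your approach is essentially the same as the paper's: the middle three adjunctions are classical, and the two outer ones follow from the tensor--Hom adjunction once you use smallness of $S$ to identify $\theta_!(M)=\Hom_R(S,M)\simeq \DRS\tensor_R M$ and $\theta_*(M)=S\tensor_R M\simeq \Hom_R(\DRS,M)$. The paper states these two equivalences crisply as instances of the general fact that for small $M'$ the natural maps $M'\to DDM'$ and $DM'\tensor_R M\to\Hom_R(M',M)$ are equivalences (writing $DM=\Hom_R(M,R)$), which is exactly what your colimit-and-reduce-to-$R$ argument proves; your presentation would benefit from extracting that lemma cleanly rather than rediscovering it mid-computation, and your closing remark about finite index is off---in the application smallness comes from Venkov's theorem, not from any index hypothesis.
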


\begin{remark}
The 2014 PhD thesis of M.Abbasirad \cite{abbasirad} investigates the model categorical underpinnings of this.
\end{remark}

\begin{proof}
We write  $DM=\Hom_R(M,R)$ for the Spanier-Whitehead duality functor, and note that when 
$M'$ is small the natural maps $M'\lra DDM'$ and $DM'\tensor_R M \lra \Hom_R(M',M)$ are equivalences. 
If $S$ is small over $R$ then the case $M'=S$ gives  
$$\theta_*(M)=S\tensor_RM \simeq \Hom_R(DS, R)\tensor_R M \simeq \Hom_R(DS, M)$$
and 
$$\theta_!(M)=\Hom_R(S, M)\simeq DS \tensor_R M.$$

Since $\DRS =DS$, the results follow from the usual Hom-tensor adjunction. 
\end{proof}

The relevant special case is as follows. 

\begin{cor}
With $\theta =i^*: \HBG \lra \HBH$, the relative dualizing module is
given by 
$$\DGH=\Hom_{\HBG}(\HBH, \HBG)\simeq \Sigma^{L(G/H)}H^*(BH).$$
Accordingly, the adjoint functors
$$\theta^{\dagger}\vdash \theta_* \vdash \theta^* \vdash \theta_! \vdash \theta^!$$
are defined on $\HBG$-modules $M$ and $\HBH$-modules $N$  by 
\begin{itemize}
\item $\theta^{\dagger} (N)=\Sigma^{LG/H} N$
\item $\theta_*(M)=\HBH\tensor_{\HBG} M$
\item $\theta^*(N)$ is $N$ with restricted action
\item $\theta_!(M)=\Hom_{\HBG}(\HBH,M)$
\item $\theta^!(N)=\Sigma^{-L(G/H)} N$.
\end{itemize}
\end{cor}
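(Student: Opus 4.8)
The plan is to read this off from Lemma~\ref{lem:adjstring} applied to $R = \HBG$, $S = \HBH$ and $\theta = i^*$. Two inputs are then needed: that $\HBH$ is small as an $\HBG$-module (so that Lemma~\ref{lem:adjstring} applies at all), and the explicit value $\DGH = \Hom_{\HBG}(\HBH, \HBG) \simeq \Sigma^{L(G/H)} H^*(BH)$ of the relative dualizing module, as an $(\HBG,\HBH)$-bimodule. Granting these, the five functors $\theta^{\dagger}\vdash \theta_*\vdash \theta^*\vdash \theta_!\vdash \theta^!$ are literally those of Lemma~\ref{lem:adjstring}: the three middle ones are extension, restriction and coextension of scalars along $\theta$ with nothing to change, while substituting the value of $\DGH$ into $\theta^{\dagger}(N) = \DGH\tensor_{\HBH} N$ and $\theta^!(N) = \Hom_{\HBH}(\DGH, N)$ turns these into the suspensions $\Sigma^{L(G/H)}N$ and $\Sigma^{-L(G/H)}N$.

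For smallness: $Bi\colon BH \lra BG$ is a fibration whose fibre is the closed manifold $G/H$, hence a finite CW-complex, so $C^*(BH)$ is built from finitely many suspensions of $C^*(BG)$ (filter by the skeleta of $G/H$, or use the bar resolution). Since $C^*(BG)$ is formal rationally, $\HBH$ is small over $\HBG$ and Lemma~\ref{lem:adjstring} is available.

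The one genuine computation is that of $\Hom_{\HBG}(\HBH, \HBG)$. I would obtain it by identifying $\HBH = C^*(BH)$ with the Borel cochains $C^*_G(G/H)$ (using $EG\times_G G/H \simeq BH$) and invoking equivariant Atiyah duality: the $G$-equivariant Spanier--Whitehead dual of $(G/H)_+$ is the Thom spectrum $(G/H)^{-T(G/H)}$, and applying $F_G(EG_+\wedge -,\Q)$ converts this equivariant duality into a duality over $C^*(BG) = F_G(EG_+,\Q)$, giving $\Hom_{C^*(BG)}(C^*_G(G/H), C^*(BG)) \simeq C^*_G((G/H)^{-T(G/H)})$. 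Now $T(G/H) = G\times_H L(G/H)$, so its Borel construction over $BG$ is the rank $\dim L(G/H)$ bundle $EH\times_H L(G/H)$ over $BH$; as $H$ is connected, $BH$ is simply connected, this bundle is orientable, and the rational Thom isomorphism identifies $C^*_G((G/H)^{-T(G/H)})$ with $\Sigma^{L(G/H)}C^*(BH) \simeq \Sigma^{L(G/H)}H^*(BH)$, the residual $\HBG$-action supplying the bimodule structure. (Equivalently, this is the statement that $C^*(BG)\lra C^*(BH)$ is relatively Gorenstein of shift $L(G/H)$, as in the Gorenstein duality results of Dwyer--Greenlees--Iyengar and of Benson--Greenlees.)

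The main obstacle is exactly this last step: ensuring the rational Thom class exists in the correct degree, that the shift is precisely $L(G/H)$ and not its negative or a twist of it (clean here because connectedness of $H$ forces simple connectivity of $BH$ and hence orientability), and that the identification is one of $(\HBG,\HBH)$-bimodules rather than merely of one-sided modules --- it is the bimodule structure that makes the formulas for $\theta^{\dagger}$ and $\theta^!$ come out as the stated shifts. Everything else is direct substitution into Lemma~\ref{lem:adjstring}.
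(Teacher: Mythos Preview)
Your proposal is correct and follows the same two-step structure as the paper: verify that $\HBH$ is small over $\HBG$ so that Lemma~\ref{lem:adjstring} applies, then identify the relative dualizing module and substitute. The paper handles smallness more algebraically---$\HBG$ is polynomial and Venkov's theorem makes $\HBH$ a finitely generated module over it---and for the dualizing module simply cites \cite[Theorem~6.8]{BGstrat}; your fibration argument for smallness and your equivariant Atiyah duality sketch are essentially an unpacking of that cited result, so the approaches coincide.
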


\begin{remark}
If we only take into account the module structures over $\HBG$ and
$\HBH$, then the suspension by $L(G/H)$ is simply the integer suspension by $\dim (G/H)$ because
$G$ and $H$ are connected. We have written it using tangent spaces to
record the functoriality in $G$ and $H$. This will be important when
we treat the case of disconnected groups. 
\end{remark}

\begin{proof}
Note that $\HBG$ is a polynomial ring and by  Venkov's theorem $\HBH$ is a finitely generated
$\HBG$-module, so the condition of Lemma \ref{lem:adjstring} is
satisfied. 

The identification of the dualizing module is given in \cite[Theorem 6.8]{BGstrat}
using stable equivariant homotopy. 
\end{proof}

\subsection{The Eilenberg-Moore correspondents of \cite{gfreeq2}}
\label{subsec:EMcorr}

We showed in Lemma \ref{lem:EMcorr} above that $i_!$ corresponds to $\theta^*$. 
It follows that the left adjoints of $i_!$ and $\theta^*$ correspond, so that $i^*$ corresponds to $\theta_*$. Taking left adjoints again, it follows that  $i_*$ corresponds to the left adjoint of $\theta_*$. 

Altogether we find
$$\adjointtriple{\freeGspectra}{i_*}{i^*}{i_!}{\freeHspectra},  $$
corresponds to
$$\adjointtriple{\torsHBGmod}{\theta^!}{\theta_*}{\theta^*}{\torsHBHmod}.$$

Of course this applies equally to the Koszul I equivalence described in Subsection \ref{subsec:KoszulI} using 
Lemma \ref{lem:KoszulIcorr}. 

\subsection{The Koszul II correspondents of \cite{gfreeq}}
\label{subsec:KoszulIIcorr}

We showed in Lemma \ref{lem:KoszulIIcorr} that $i_*$ corresponds to $\theta^*$. 
It follows that the left and right adjoints of $i_*$ correspond to the left and right adjoints of 
 $\theta^*$.

Altogether we find
$$\adjointtriple{\freeGspectra}{i_*}{i^*}{i_!}{\freeHspectra},  $$
corresponds to
$$\adjointtriple{\DGtorsHBGmod}{\theta^*}{\theta_!}{\theta^!}{\DGtorsHBHmod}.$$

\section{Twisted group rings}
\label{sec:twisted}
Before turning to the case of arbitrary compact Lie groups, it is worth recording some general facts about
the algebra of twisted group rings. 

First suppose $R$ is a commutative $\Q$-algebra with an action of a finite group $A$, so that we may form the twisted
group ring $R[A]$. If $R$ is a $k$-algebra, and $M_1, M_2$ are $R[A]$-modules
$$\Hom_{R[A]} (M_1, M_2) =\Hom_{R} (M_1, M_2)^A$$

Now suppose that $\ib : B\lra A$ is a homomorphism of finite groups. We want
to observe that extension and coextension of scalars along $\ib: \Q B
\lra \Q A$ (which are group theoretic induction and coinduction when
$\ib$ is a monomorphism) are compatible with the action on $R$. 
This is straightforward once we observe we  have  a map $\ib: R[B] \lra R[A]$. 

\begin{lemma}
Suppose $N$ is an $R[B]$-module. % and $M$ is an $R[A]$-module
\begin{itemize}
\item The natural map is an isomorphism $\Q A\tensor_{\Q B}N\cong R A\tensor_{R
   B}N$, so that the two possible meanings of $\ib_*N$ are
 compatible. 
\item The natural map is  an isomorphism $\Hom_{\Q B}(\Q A, N)\cong 
\Hom_{R B}(R A, N)$,  so that the two possible meanings of $\ib_!N$ are
 compatible. 
\item 
We have  an adjoint triple
$$\adjointtriple{\RAmod}{\ib_*}{\ib^*}{\ib_!}{\SBmod}. \qqed$$
\end{itemize} 
\end{lemma}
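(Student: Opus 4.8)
The plan is to deduce all three statements from a single structural observation about $R[B]$ (and $R[A]$) as a $\Q B$-module, after which the bullets become routine applications of associativity of tensor, of tensor--Hom adjunction, and of the standard adjoint triple attached to a ring homomorphism.

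First I would record the elementary fact that $R[B]=\bigoplus_{b\in B}Rb$ is \emph{free as a right $\Q B$-module} on $R\cdot 1_B$ --- equivalently $R[B]\cong R\otimes_{\Q}\Q B$ as right $\Q B$-modules, via $r\otimes b\mapsto rb$, since right multiplication by $B$ merely permutes the summands $Rb$ without disturbing the $R$-coefficient --- and \emph{free as a left $\Q B$-module} on $R\cdot 1_B$ --- equivalently $R[B]\cong \Q B\otimes_{\Q}R$ as left $\Q B$-modules, via $b\otimes r\mapsto br={}^{b}r\cdot b$. The same holds with $A$ in place of $B$. Feeding this into the obvious maps one gets that
$$\Q A\otimes_{\Q B}R[B]\ \xrightarrow{\ \alpha\otimes x\,\mapsto\,\alpha\,\ib(x)\ }\ R[A],\qquad R[B]\otimes_{\Q B}\Q A\ \xrightarrow{\ x\otimes\alpha\,\mapsto\,\ib(x)\,\alpha\ }\ R[A]$$
are isomorphisms, the first of right $R[B]$-modules and the second of left $R[B]$-modules, and both are natural (independent of any choice of coset representatives). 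Note that no injectivity of $\ib$ enters.

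Granting this, the first bullet follows by tensoring the first isomorphism with an $R[B]$-module $N$ over $R[B]$ and invoking associativity:
$$\ib_*N=R[A]\otimes_{R[B]}N\ \cong\ (\Q A\otimes_{\Q B}R[B])\otimes_{R[B]}N\ \cong\ \Q A\otimes_{\Q B}N ,$$
naturally in $N$; one checks the composite is exactly the canonical comparison map induced by $\Q A\hookrightarrow R[A]$, so in particular the $\Q A$-module $\Q A\otimes_{\Q B}N$ inherits the $R[A]$-module structure of $\ib_*N$. Dually, the second bullet follows by applying $\Hom_{R[B]}(-,N)$ to the second isomorphism and using tensor--Hom adjunction:
$$\ib_!N=\Hom_{R[B]}(R[A],N)\ \cong\ \Hom_{R[B]}\bigl(R[B]\otimes_{\Q B}\Q A,\,N\bigr)\ \cong\ \Hom_{\Q B}(\Q A,N) ,$$
again naturally, and again matching the comparison map given by restriction along $\Q A\hookrightarrow R[A]$.

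Finally, the third bullet is simply the standard adjoint triple attached to the ring homomorphism $\ib\colon R[B]\to R[A]$: restriction of scalars $\ib^*$ always has extension of scalars as a left adjoint and coextension of scalars as a right adjoint, and by the first two bullets these left and right adjoints coincide with group-theoretic induction and coinduction along $\ib\colon\Q B\to\Q A$, now carrying their natural $R$-module structures. The only point needing any care anywhere is the bookkeeping with the conjugation action $r\mapsto{}^{b}r$ in the two freeness statements, and the verification that the displayed isomorphisms are genuinely natural rather than dependent on chosen bases or coset representatives; I expect that to be the sole --- and minor --- obstacle.
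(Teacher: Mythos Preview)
Your proof is correct. The paper gives no proof at all: the lemma is stated with a \qqed\ at the end of the third bullet, signalling that the author regards it as routine and omits the argument entirely. Your write-up supplies exactly the kind of verification the author is implicitly leaving to the reader, and the structural observation you isolate --- that $R[B]$ is free as a (left or right) $\Q B$-module, with the twist absorbed into the identification --- is precisely the reason the two notions of $\ib_*$ and $\ib_!$ coincide.

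Two minor remarks. First, the third bullet as printed reads ``$S[B]$-modules'', but $S$ has not yet been introduced at this point in the paper; this is a typo for $R[B]$-modules, and you have (correctly) read it that way. Second, your care about naturality and independence of coset representatives is well placed but perhaps slightly overstated as a potential obstacle: once you have written down the map $\alpha\otimes x\mapsto \alpha\,\ib(x)$ on $\Q A\otimes_{\Q B}R[B]\to R[A]$, naturality is automatic and no choices are made; the only genuine check is bijectivity, which your freeness observation handles.
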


%Reassured by the compatibility, we will use the simpler notation of the $i$ functors rather than the $\ib$ functors henceforth. 

Now suppose given a map $\theta_e : R\lra S$ of commutative rings. 
We suppose given an action of a finite group $A$ on $R$ and a finite group $B $ on $S$, 
and that these are compatible via a  homomorphism 
$\ib: B\lra A$ of finite groups in the sense that   $\theta_e (\ib (b)
r)=b \theta_e (r)$. We may then write $\theta =(\theta_e, \ib)$ for
the pair of structure maps. 
   
%We let $A' =\ib (B )$, and $i: A'\lra A$ be the inclusion. 

\begin{lemma}
We have an adjoint triple
$$\adjointtriple{\RAmod}{\theta^{\dagger}}{\theta_*}{\theta^*}{\SBmod}.$$
The functors are defined as follows.
\begin{itemize}
\item  The functor $\theta_*=\theta^e_* : \RAmod \lra \SBmod$ is defined by 
the formula  $\theta_*(M)=S\tensor_R M$, where the $B$-action is diagonal, using $\ib$ on the second factor
(so we might write $\theta_*=\ib^*\theta^e_*$).  
\item The left adjoint of $\theta_*$ is the functor $\theta^{\dagger}$ defined by 
 $\theta^{\dagger}(N)=\ib_*\theta_e^{\dagger}N$. 
\item The right adjoint of $\theta_*$ is the functor $\theta^*$ defined by 
 $\theta^*(N)=\ib_!\theta_e^* N$. 
\end{itemize}
\end{lemma}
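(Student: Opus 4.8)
The plan is to build the adjoint triple from the known pieces: the ``ungraded'' (i.e. untwisted) adjoint triple $\theta_*^e \dashv \theta^* \dashv \theta_!^e$ associated to $\theta_e : R \lra S$ on ordinary modules, together with the group-theoretic adjoint triples $\ib_* \dashv \ib^* \dashv \ib_!$ from the previous lemma, and then assemble them so that the twisted structure is carried along. Concretely, I would first check that the formula $\theta_*(M) = S \tensor_R M$ with the diagonal $B$-action (via $\ib$ on the $S$-factor) is well-defined on the twisted module categories: one must verify that the $B$-action is semilinear over $S$, which is exactly the compatibility hypothesis $\theta_e(\ib(b) r) = b\,\theta_e(r)$ applied in the usual coequalizer description of the tensor product. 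This identifies $\theta_* = \ib^* \theta_*^e$ as composite of the (twisted-linear) extension of scalars $\theta_*^e$ followed by restriction $\ib^*$ along the group map.

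Next I would produce the two adjoints purely formally. Writing $\theta_* = \ib^* \circ \theta_*^e$ as a composite of functors each of which has a known left adjoint ($\ib_*$ for $\ib^*$, and $\theta_e^{\dagger}$ for $\theta_*^e$ by Lemma~\ref{lem:adjstring} applied to the underlying rings, assuming $S$ is small over $R$) and a known right adjoint ($\ib_!$ for $\ib^*$, and $\theta^*$ in the untwisted sense for $\theta_*^e$), the left adjoint of the composite is the composite of left adjoints in the opposite order, $\theta^{\dagger} = \theta_e^{\dagger} \circ \ib_*$, wait --- more precisely $\theta^{\dagger}(N) = \ib_* \theta_e^{\dagger} N$ since $\theta_*^e$ is applied first to $M$, so reading the composite $M \mapsto \theta_*^e M \mapsto \ib^* \theta_*^e M$ its left adjoint sends $N \mapsto \ib_* N \mapsto \theta_e^{\dagger} \ib_* N$; comparing with the stated formula one sees the order is $\theta_e^{\dagger} \ib_*$ versus $\ib_* \theta_e^{\dagger}$, so I would be careful here and just take whichever order the formal calculation dictates. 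Similarly the right adjoint of $\theta_*$ is $N \mapsto \theta^* \ib_! N$ (untwisted right adjoint of extension of scalars, precomposed with coinduction), which is the stated $\theta^*(N) = \ib_! \theta_e^* N$ up to the same bookkeeping.

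The only genuine content beyond formal adjoint-chasing is checking that these composite functors land in the twisted module categories and that the adjunction units/counits are equivariant. For the left adjoint, $\ib_*$ and $\theta_e^{\dagger}$ individually preserve the relevant group actions provided the bimodule $\DRS = \Hom_R(S,R)$ carries a compatible $A$-action (induced functorially from the $A$-action on $R$ and on $S$ via $\ib$) — this is where I would invoke functoriality of the dualizing complex, and it is the main place the twisting is non-trivial. For the right adjoint one uses the first bullet of the preceding twisted-group-ring lemma to identify $\Hom_{\Q B}(\Q A, -)$ with $\Hom_{RB}(RA,-)$, so that $\ib_!$ is compatible with the $R$-linear structure, and then $\theta_e^*$ obviously preserves everything.

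I expect the main obstacle to be precisely the equivariance of the dualizing bimodule and the resulting verification that $\theta^{\dagger}$ genuinely lands in $\SBmod$ rather than merely in $S$-modules; once that is in place, the adjointness is a routine consequence of the untwisted adjunctions of Lemma~\ref{lem:adjstring} composed with those of the twisted-group-ring lemma, together with the observation $\Hom_{R[A]}(M_1,M_2) = \Hom_R(M_1,M_2)^A$ which lets one pass hom-sets back and forth between the twisted and untwisted settings. So the proof is ``take the composite description $\theta_* = \ib^*\theta_*^e$, compose the known adjoints, and check the twisting is carried along''.
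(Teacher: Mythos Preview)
Your proposal is correct and follows essentially the same approach as the paper: the paper's proof is the single sentence ``in view of the fact that the space of $R[A]$-maps is just the $A$-equivariant $R$-maps, and similarly for $S[B]$, this is just a case of checking that the adjunctions defined above are equivariant'', which is exactly your plan of reducing to the untwisted adjunctions via $\Hom_{R[A]}(M_1,M_2)=\Hom_R(M_1,M_2)^A$ and then checking equivariance. Your worry about the order of the composite and the equivariance of the dualizing bimodule is legitimate bookkeeping but not a genuine obstacle---once you write the adjunction isomorphism as $\Hom_S(N,S\tensor_R M)^B \cong \Hom_R(\theta_e^{\dagger}N,M)^B \cong \Hom_R(\ib_*\theta_e^{\dagger}N,M)^A$ (the first step being equivariance of the untwisted adjunction, the second Frobenius reciprocity), the order is forced and the paper's terse proof is complete.
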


\begin{proof}
In view of the fact that the space of $R[A]$-maps is just the $A$-equivariant $R$-maps, and similarly for $S[B]$, 
this is just a case of checking that the adjunctions defined above are equivariant. 
\end{proof}

\section{The general case}
\label{sec:general}
For a general compact Lie group $G$ and subgroup $H$, we have a diagram 
$$\diagram
1\rto &G_e \rto & G\rto &\Gbar \rto &1\\
1\rto &H_e \uto^{i_e} \rto & H \uto^i \rto &\Hbar \uto^{\ib} \rto &1
\enddiagram$$
We note that $\ib$ need not be a monomorphism. 

We observe that with $R=\HBGe$, $A=\Gbar$,  $S=\HBHe$, $B=\Hbar$ and
$\theta_e =i^* : \HBGe \lra \HBHe$ and $\theta =(\theta_e, \ib)$,  
we obtain an instance of the twisted group ring context described in
the Section \ref{sec:twisted}. 
The compatibility of $\theta_e$ and $\ib$ arises since the action of $B=\Hbar$ on $H_e$ by conjugation 
is compatible with the action of $A=\Gbar$ by conjugation on $G_e$, and the action of $G_e$ on $\HBGe$ is  trivial. 

The first stage of the equivalence 
$$\freeGspectra \simeq \DGtorsHBGtwmod$$
 is again  extension of scalars along $\bbS \lra DEG_+=:C^*(EG)$. However the second stage is now to take
$G_e$ fixed points (rather than $G$-fixed points) and to consider the value in free 
$\Gbar$-spectra. 

\begin{lemma}
$i_!$ corresponds to $\theta^*$. 
\end{lemma}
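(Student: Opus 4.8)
The plan is to mimic the connected-group computation of Lemma \ref{lem:EMcorr}, but now carrying along the component-group action throughout, so that the identification lands in the twisted group ring context of Section \ref{sec:twisted}. Recall that under the Eilenberg-Moore equivalence $\freeGspectra \simeq \DGtorsHBGtwmod$, a free $G$-spectrum $X$ is sent to the $G_e$-fixed points $(DEG_+ \sm X)^{G_e}$, regarded as an object in free $\Gbar$-spectra and hence (after passing to homotopy and using formality of $C^*(BG_e)$) as a DG-torsion $\HBGtw$-module; similarly for $H$. So I must compute the $H_e$-fixed-point object attached to $i_!$ applied to a free $H$-spectrum, and compare it with $\theta^*$ applied to the $H_e$-fixed-point object attached to the $H$-spectrum.

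First I would write, for a free $H$-spectrum of the form $DEH_+ \sm Y$, the chain of identifications
$$(i_!(DEH_+\sm Y))^{G_e} = F_H(G_+, DEH_+\sm Y)^{G_e} = F_{H_e}(G_+, DEH_+ \sm Y)^{G_e/H_e}.$$
The key point is that $G_e/H_e$ need not be all of $G_e$, and one must be careful that $G_+$ restricted to $G_e$ is induced up from $H_e$ in the relevant sense so that the Wirthmüller/Frobenius-type identification $F_{H_e}(G_+, Z)^{G_e} \simeq Z^{H_e}$ holds after the cellularization that defines the free model. Concretely I expect the computation to collapse, as in the connected case, to $i_e^*(DEH_+ \sm Y)^{H_e}$ equipped with the residual action of $\Hbar = H/H_e$, which then maps to the residual action of $\Gbar$ via restriction along $\ib: \Hbar \lra \Gbar$. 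On the algebra side this is exactly the functor $\theta^* = \ib_!\,\theta_e^* \,(-)$ from the second Lemma of Section \ref{sec:twisted}: restrict the module structure along $\theta_e = i_e^*: \HBGe \lra \HBHe$ and then apply coinduction along $\ib$ for the group actions — and the $\ib_!$ is forced precisely because the fixed points $(-)^{G_e/H_e}$ produce a coinduction, not an induction, at the level of $\Gbar$-actions.

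The steps in order: (1) unwind the Eilenberg-Moore model for both $G$ and $H$, recording that the second stage is now $G_e$-fixed points landing in free $\Gbar$-spectra; (2) compute $(i_!(DEH_+\sm Y))^{G_e}$ as above, reducing it to $i_e^*(DEH_+ \sm Y)^{H_e}$ together with its residual $\Hbar$-action viewed as a $\Gbar$-object via $\ib$; (3) match the residual-action bookkeeping: observe that viewing an $\Hbar$-object as a $\Gbar$-object through the fixed-point construction $(-)^{G_e/H_e}$ is coinduction $\ib_!$, not restriction or induction; (4) invoke formality of $C^*(BG_e)$ and $C^*(BH_e)$ to pass from spectra to DG twisted-group-ring modules, at which point the topological functor $i_!$ matches the algebraic $\theta^*(N) = \ib_!\,\theta_e^*\,N$ precisely by the Lemma of Section \ref{sec:twisted}.

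The main obstacle is step (3) together with the subtlety flagged in step (2): getting the variance of the component-group part exactly right. In the connected case there is no component group and Lemma \ref{lem:EMcorr} is a one-line calculation; here the only real content is checking that $G_e$-fixed points of a $G$-spectrum coinduced from $H$ produce the $\Hbar$-to-$\Gbar$ coinduction $\ib_!$ rather than induction $\ib_*$, and that this is compatible with the module-structure restriction $\theta_e^*$. This is why the statement is $\theta^* = \ib_!\theta_e^*$ and not something with $\ib_*$ — the author's Section \ref{sec:twisted} has already packaged exactly this combination as the right adjoint of $\theta_*$, so once the fixed-point calculation is seen to land there, the lemma follows, and (as in the connected case) the correspondents of $i^*$ and $i_*$ will be deduced afterwards by taking successive left adjoints.
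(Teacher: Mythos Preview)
Your overall strategy matches the paper's: compute the $G_e$-fixed-point invariant of $i_!Y$ and identify it with $\ib_!\theta_e^*$ applied to the $H_e$-fixed-point invariant of $Y$, which is precisely $\theta^*$ as defined in Section~\ref{sec:twisted}. You also correctly anticipate that the component-group contribution must be coinduction $\ib_!$ rather than induction.

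However, your step (2) is not well-formed. The expression $F_{H_e}(G_+, DEH_+\sm Y)^{G_e/H_e}$ does not make sense: $H_e$ need not be normal in $G_e$, so $G_e/H_e$ is only a homogeneous space, not a group, and one cannot take fixed points for it. (Even the relation $H\cap G_e=H_e$ can fail, so there is no clean ``two-step fixed points'' factorisation of the sort you are reaching for.) You flag this yourself as the main obstacle, but you do not supply a mechanism to resolve it, and the formula as written cannot be repaired by bookkeeping alone.

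The paper sidesteps the direct computation entirely with a Yoneda argument. Writing $Y'=DEH_+\sm Y$, it tests $F_H(G_+,Y')^{G_e}$ against an arbitrary $\Gbar$-spectrum $T$ (inflated to $G$) and runs the adjunctions
\[
[T, F_H(G_+, Y')^{G_e}]^{\Gbar}=[T, F_H(G_+, Y')]^{G}=[i^*T, Y']^{H}=[\ib^*T, (Y')^{H_e}]^{\Hbar},
\]
the last step because $i^*T$ is inflated from $\Hbar$ via $\ib$. This exhibits $F_H(G_+,Y')^{G_e}$ as $\ib_!\big((Y')^{H_e}\big)$, i.e.\ as $\theta^*$ of the $H$-invariant, without ever needing to factor the fixed-point functor through an ill-defined quotient. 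Replacing your step (2)--(3) with this representability argument makes the proof go through; the rest of your outline (steps (1) and (4)) is fine and agrees with the paper.
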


\begin{proof}
The invariant of an $H$-spectrum $Y$ is the $\Hb$-spectrum $(Y\sm
DEH_+)^{H_e}$ (or rather its homotopy groups as an $\Hb$-module. The
invariant of the $G$-spectrum $i_!Y$ is the $\Gb$-spectrum 
$$(F_H(G_+, Y)\sm DEG_+)^{G_e}= F_H(G_+, Y\sm DEH_+)^{G_e}  $$
(or rather its homotopy groups as a $\Gb$-module). 
Now if $T$ is inflated from $\Gbar$ we have
$$[T, F_H(G_+, Y')^{G_e}]^{\Gbar}=[T, F_H(G_+, Y')]^{G} =[T, Y']^{H} =[T, (Y')^{H_e}]^{\Hbar}$$
as required.  
%
%We have seen that in the connected case restriction of groups corresponds to extension of scalars, so that
%in this sense $i^*$ corresponds to $\theta^1_*$. It remains to check that the actions of the finite groups are correct.   
%Taking  $X'=DEG_+\sm X$,  this gives j
%$$C_*((i_e^*X')^{H_e})\simeq C^*(BH_e)\tensor_{C^*(BG_e)} C_*((X')^{G_e})$$
%This is natural, and hence respects $\Gbar$ and $\Hbar$ actions
\end{proof}

This allows us to give the general case. 

\begin{thm}
\label{thm:main}
Under the equivalence of \cite{gfreeq2}, the adjoint triple
$$\adjointtriple{\freeGspectra}{i_*}{i^*}{i_!}{\freeHspectra},  $$
corresponds to the adjoint triple
$$\adjointtriple{\DGtorsHBGtwmod}{\theta^{\dagger}}{\theta_*}{\theta^*}{\DGtorsHBHtwmod}.$$
These algebraic models of the change of groups morphisms are given
explicitly by 
\begin{itemize} 
\item $$\theta^{\dagger}(N)=\Q [\Gbar]\tensor_{\Q [\Hbar ]} \Sigma^{LG/H}N, $$
\item $$\theta_*(M)=\HBHe \tensor_{\HBGe}M$$
\item $$\theta^*(N)=\Hom_{\Q [\Hbar]} (\Q [\Gbar], N)$$
\end{itemize}
where  $\Hb$ acts on $L(G/H)$ by differentiating the conjugation
action. \qqed 
\end{thm}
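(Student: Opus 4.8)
The plan is to follow exactly the strategy already used in the connected case: pin down one of the three functors by a direct spectrum-level calculation, and then obtain the other two purely formally by taking adjoints. The Lemma immediately preceding the theorem does the hard work, identifying $i_!$ with $\theta^*$ under the equivalence of \cite{gfreeq2}, where now $\theta^*(N)=\ib_!\theta_e^*N$ in the twisted group ring context of Section \ref{sec:twisted}. So the first step is simply to invoke that Lemma: under the stated equivalence, $i_!$ corresponds to $\theta^*$.

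Next I would run the adjoint-chasing argument. Since equivalences of homotopy categories preserve adjunctions, the left adjoint of $i_!$ corresponds to the left adjoint of $\theta^*$; that is, $i^*$ corresponds to $\theta_*$. Taking left adjoints once more, $i_*$ corresponds to $\theta^{\dagger}$, the left adjoint of $\theta_*$. This gives the correspondence of adjoint triples asserted in the theorem. To make the triple well-defined one should note that these functors genuinely exist on the torsion module categories: $\theta_*$ and $\theta^*$ preserve torsion because $\HBHe$ is a finitely generated $\HBGe$-module by Venkov's theorem (so the condition of Lemma \ref{lem:adjstring} holds for $\theta_e$), and the group-ring functors $\ib_*,\ib^*,\ib_!$ are exact and commute with localization.

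The remaining task is to write the three functors explicitly, which is where the twisted-group-ring bookkeeping of Section \ref{sec:twisted} is combined with the connected-case computation of the relative dualizing module. For $\theta_*$ this is immediate: $\theta_*(M)=\HBHe\tensor_{\HBGe}M$ with the diagonal $\Hbar$-action via $\ib$ on the second factor. For $\theta^*$ we unwind $\theta^*(N)=\ib_!\theta_e^*N=\Hom_{\Q[\Hbar]}(\Q[\Gbar],N)$, using the second bullet of the twisted-group-ring lemma to see the two possible meanings of $\ib_!$ agree. For $\theta^{\dagger}=\ib_*\theta_e^{\dagger}$ I would feed in the connected-case identification $\theta_e^{\dagger}(N)=\D(G_e|H_e)\tensor_{\HBHe}N\simeq \Sigma^{L(G/H)}N$ from the Corollary, so that $\theta^{\dagger}(N)=\Q[\Gbar]\tensor_{\Q[\Hbar]}\Sigma^{L(G/H)}N$, and record that $\Hbar$ acts on $L(G/H)$ by differentiating its conjugation action (this is the point of writing the shift via tangent spaces rather than as an integer shift in the Remark following the Corollary).

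The genuinely delicate point — and the one place where more than formal manipulation is needed — is the interaction between the twist by $L(G/H)$ and the two group actions: one must check that the $\Hbar$-action on the shift is the conjugation-derivative action and that it is compatible with $\ib$, so that $\ib_*$ of the shifted module is the correct twisted-group-ring module. This is exactly the subtlety flagged in the Remark after the Corollary, and it is forced by the functoriality in $G$ and $H$; everything else is the mechanical propagation of adjunctions through an equivalence of categories together with the explicit formulas already assembled in Sections \ref{sec:twisted} and the connected case.
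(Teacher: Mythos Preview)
Your proposal is correct and follows essentially the same approach as the paper: the theorem is assembled from the immediately preceding Lemma (identifying $i_!$ with $\theta^*$), the adjoint-chasing argument already used in the connected case, and the explicit twisted-group-ring formulas of Section~\ref{sec:twisted} together with the Corollary identifying $\DGeHe\simeq\Sigma^{L(G/H)}\HBHe$. The paper marks the theorem with \texttt{\textbackslash qqed} precisely because all the ingredients have been put in place, and your write-up recapitulates that assembly faithfully (with the added, and welcome, remark on why the functors preserve torsion).
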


\begin{example}
We take $G=SO(3)$, $H=O(2)$. Now $\HBG =H^*(BSO(3))=\Q [d]$, 
$\HBHe=H^*(BSO(2))=\Q[c]$ with $\Hbar =W$ of order 2, acting on $\Q [c]$ 
by $c\mapsto -c$. The map $\ib : \Q W \lra \Q$ is the augmentation. The inclusion of identity components induces 
$\theta_e: \Q [d]\lra \Q [c]$ which is given by mapping $d $ to $c^2$.
 Remembering that cohomological degrees are negative, the relative dualizing module is given by 
$$\DGeHe =\Hom_{\Q [d]}(\Q[c], \Q[d])\cong  c^{-1}\cdot \Q [c]\cong
\Sigma^2 \tilde{\Q}\tensor \Q [c].$$

Free $SO(3)$-spectra are modelled by torsion $\Q[d]$-modules, and
free $O(2)$-spectra are modelled by torsion $\Q[c][W]$-modules. The forgetful functor
$i^*$ is modelled by 
$$\theta_*M=\Q [c] \tensor_{\Q [d]} M, $$
the induction functor is modelled by 
$$\theta^{\dagger} (N)=\Sigma^2(\tilde{\Q}\tensor N)_W$$
and  the coinduction functor is modelled by 
$$\theta^* (N)=N^W. \qqed $$
\end{example}


\begin{thebibliography}{999}
\bibitem{abbasirad} M. Abbasirad
``Homotopy theory of differential graded modules and adjoints of restrictions of scalars.''
 Sheffield PhD Thesis (2014) 122pp
\bibitem{BGca}
  D.J.Benson and J.P.C.Greenlees 
  ``Commutative algebra for cohomology
  rings of classifying spaces of compact Lie groups.'' 
  J. Pure and Applied Algebra 
  {\bf 122} (1997) 41-53.
\bibitem{kappaI}
   D.J.Benson and J.P.C.Greenlees
   ``Localization and duality in topology and  modular representation theory.''
   JPAA {\bf 212} (2008) 1716-1743
\bibitem{BGstrat}
D.J.Benson and J.P.C.Greenlees
``Stratifying the derived category of cochains on $BG$ for $G$ a
compact Lie group.''
JPAA {\bf 218} (2014), 642-650
\bibitem{tec} 
   W.G.Dwyer and J.P.C.Greenlees 
   ``Complete modules and torsion modules.''
   American J. Math. {\bf 124} (2002) 199-220
\bibitem{o2q} J.P.C.Greenlees 
  ``Rational $O(2)$-equivariant cohomology theories.'' 
  Fields Institute Communications {\bf 19} (1998) 103-110
\bibitem{s1q} J.P.C.Greenlees 
  ``Rational $S^1$-equivariant stable homotopy theory.''
  Mem. American Math. Soc. (1999) xii+289 pp.
\bibitem{so3q} J.P.C.Greenlees 
  ``Rational $SO(3)$-equivariant cohomology theories.'' 
   Contemporary Maths. {\bf 271}, American Math. Soc.
   (2001) 99-125
%\bibitem{ellT} J.P.C.Greenlees 
%   ``Rational $S^1$-equivariant elliptic cohomology''
%    Topology {\bf 44} (2005) 1213-1279
%\bibitem{tnq2} J.P.C.Greenlees 
%  ``Rational torus-equivariant cohomology theories II:
%   the algebra of localization and inflation.''
%   Preprint (2007) 29pp
\bibitem{AGtoral} 
 J.P.C.Greenlees 
  ``Rational equivariant cohomology theories with toral support''
 Preprint (2015) 42pp arXiv 1501.03425 
\bibitem{gfreeq}
J.P.C.Greenlees and B.E.Shipley 
``An algebraic model for free rational $G$-spectra for compact connected
Lie groups $G$.''
 Math Z {\bf 269} (2011) 373-400, DOI 10.1007/s00209-010-0741-2
\bibitem{gfreeq2}
J.P.C.Greenlees and B.E.Shipley 
``An algebraic model for free rational $G$-spectra.''
Bull. LMS {\bf 46} (2014) 133-142, DOI 10.1112/blms/bdt066
\bibitem{cellprin}
J.P.C.Greenlees and B.E.Shipley 
``The cellularization principle''
HHA {\bf 15} (2013) 173-184, arXiv:1301.5583
\bibitem{modulefps}
J.P.C.Greenlees and B.E.Shipley
``Fixed point adjunctions for module spectra.''
Algebraic and Geometric Topology 14 (2014) 1779-1799
arXiv:1301.5869
\bibitem{Kedziorek} M.Kedziorek 
``Algebraic models for rational $G$-spectra.''
Thesis (2014) 176pp
\bibitem{lmsm} L.G.Lewis, J.P.May and M.Steinberger (with contributions
by J.E.McClure) ``Equivariant stable homotopy theory''
Lecture notes in mathematics {\bf 1213}, Springer-Verlag (1986) 
\bibitem{MM}
     M.~Mandell and J.~P.~May, 
     {\em Equivariant Orthogonal Spectra and S-Modules},
     Mem. Amer. Math. Soc. {\bf 159} (2002), no. 755, x+108 pp.
\bibitem{SS} S.Schwede and B.E.Shipley 
   ``Stable model categories are categories of modules.''
    Topology {\bf 42} (2003), no. 1, 103--153.
%\bibitem{Shipley} B.E.Shipley, 
%   ``An algebraic model for rational $S\sp 1$-equivariant stable homotopy theory.'' 
%   Q. J. Math. 53 (2002), no. 1, 87--110. 
\end{thebibliography}
\end{document}